\newcounter{item}[section]
\newcounter{kirshr}
\newcounter{kirsha}
\newcounter{kirshb}
\newtheorem{theorem}{Theorem}[section]
\theoremstyle{definition}
\newtheorem{definition}[theorem]{Definition}
\def\A{{\mathfrak{A}}}
\def\B{{\mathfrak{B}}}
\def\(R)RA{{\bf (R)RA}}
\def\B{{\sf B}}
\def\A{{\mathfrak{A}}}
\def\B{{\mathfrak{B}}}
\def\A{{\cal{A}}}
\def\B{{\mathfrak{B}}}
\title{On substitution algebras of permutations}
\author{Mohammad Assem}
\begin{document}
\maketitle
\begin{abstract}
The subject of this paper is a simulation to that in \cite{sagiphd} but here we consider substitutions corresponding to transpositions instead of replacements.
\end{abstract}
\section{Transposition Algebras}

\begin{definition}[Transposition Set Algebras]
Let $U$ be a set. \emph{The full transposition set algebra of dimension} $\alpha$ \emph{with base}
$U$ is the algebra $$\langle\mathcal{P}({}^\alpha U); \cap,\sim,S_{ij}\rangle_{i\neq j\in\alpha};$$
where $\sim$ means complement w.r.t. ${}^\alpha U$  and $S_{ij}$'s are unary operations defined
by $$S_{ij}(X)=\{q\in {}^\alpha U:q\circ [i,j]\in X\}.$$ Recall that $[i,j]$ denotes that transposition of $\alpha$ that permutes $i,j$
and leaves any other element fixed. The class of {\it Transposition Set Algebras of dimension $\alpha$} is defined as follows:
$$SetTA_\alpha=\mathbf{S}\{\A:\A\text{ is a full transposition set algebra of dimension }\alpha $$$$ \text{ with base }U,\text{ for some set }U\}.$$
\end{definition}

\begin{definition}[\emph{Representable Transposition Set Algebras}]
The class of {\it Representable Transposition Set Algebras} of dimension $\alpha$ is defined to be $$RTA_\alpha=\mathbf{SP}SetTA_\alpha.$$
\end{definition}

\begin{definition}[\emph{Permutable Set}]
Let $U$ be a given set, and let $D\subset{}^\alpha U.$ We say that $D$ is \emph{permutable} iff
$$(\forall i\neq j\in\alpha)(\forall s\in{}^\alpha U)(s\in D\Longrightarrow s\circ [i,j]\in D).$$
\end{definition}

\begin{definition}[\emph{Permutable Algebras}]
The class of \emph{Permutable Algebras} of dimension $\alpha$, $\alpha$ an ordinal,  is defined to be
$$PTA_{\alpha}=\mathbf{SP}\{\langle\mathcal{P}(D); \cap,\sim,S_{ij}\rangle_{i\neq j\in \alpha}: U\text{ \emph{is a set}}, D\subset{}^{\alpha}
U\text{\emph{permutable}}\}.$$
Here  $S_{ij}(X)=\{q\in D:q\circ [i,j]\in X\}$, and $\sim$ is complement w.r.t. $D$.\\
If $D$ is a permutable set, then we denote the algebra
$\wp(D)=\langle\mathcal{P}(D);\cap,\sim,S_{ij}\rangle_{i\neq j\in\alpha}$ by $\wp(D)$.
\end{definition}
Note that $\wp(^{\alpha}U)$ can be viewed as the complex algebra of the atom structure $(^{\alpha}U, S_{ij})_{i,j\in \alpha}$ where
for all $i,j, S_{ij}$ is a binary relation, such that for $s,t\in {}^{\alpha}U$, $(s,t)\in S_{ij}$ iff $s\circ [i,j]=t.$ When we consider permutable sets
then from the modal point of view we are restricting or relativizing the states or assignments to $D$.

For some time to come, we restrict ourselves to finite $\alpha\geq 2$, which we denote by $n$. Now we distinguish elements of $SetTA_n$ which play an important role.
\begin{definition}[\emph{Small algebras}]
For any natural number $k\leq n$
the algebra $\A_{nk}$ is defined to be $$\A_{nk}=\langle\mathcal{P}({}^nk);\cap,\sim,S_{ij}\rangle_{i\neq j\in n}.$$ So $\A_{nk}\in SetTA_n$.
\end{definition}

We will see that $\{\A_{nk}:k\leq n\}$ generates $RTA_n.$ Now we turn to showing that the class $RTA_n$ is not a variety.

\begin{theorem}\label{relativization}
Let $U$ be a set and suppose $G\subset{}^n U$ is permutable.
Let $\A=\langle\mathcal{P}({}^n U);\cap,\sim,S_{ij}\rangle_{i\neq j\in n}$ and let $\mathcal{B}=\langle\mathcal{P}(G);\cap,\sim ,S_{ij}\rangle_{i\neq j\in n}$.
Then the following map $h:\A\longrightarrow\mathcal{B}$ defined by $h(x)=x\cap G$ is a homomorphism.

\end{theorem}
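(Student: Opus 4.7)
\begin{demo}{Proof plan}
The plan is to verify that $h$ preserves each of the three operations $\cap,\sim,S_{ij}$ separately, and to note that the only nontrivial case is $S_{ij}$, where the hypothesis that $G$ is permutable is exactly what is needed.

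First I would dispose of the Boolean part. For intersection, $h(x\cap y) = (x\cap y)\cap G = (x\cap G)\cap(y\cap G) = h(x)\cap h(y)$, which is immediate from elementary set theory. For complementation, one has to remember that $\sim$ on the left is complement relative to ${}^n U$ while $\sim$ on the right is complement relative to $G$; then $h(\sim x) = ({}^n U\setminus x)\cap G = G\setminus x = G\setminus(x\cap G) = \sim h(x)$. These are routine.

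The step I would expect to be the crux is preservation of $S_{ij}$. Unfolding the definitions gives
\[
 h(S_{ij}(x)) = \{q\in {}^n U : q\circ[i,j]\in x\}\cap G = \{q\in G : q\circ[i,j]\in x\},
\]
while
\[
 S_{ij}(h(x)) = \{q\in G : q\circ[i,j]\in x\cap G\}.
\]
So the equality $h(S_{ij}(x)) = S_{ij}(h(x))$ reduces to showing that for every $q\in G$, the condition $q\circ[i,j]\in x$ is equivalent to $q\circ[i,j]\in x\cap G$. The nontrivial direction requires $q\circ[i,j]\in G$ whenever $q\in G$, and this is precisely the definition of $G$ being permutable. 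Hence permutability of $G$ is used in, and only in, the $S_{ij}$ clause, and the verification is complete.
\end{demo}
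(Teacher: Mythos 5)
Your proposal is correct and follows essentially the same route as the paper: the Boolean operations are dispatched as routine, and the $S_{ij}$ case is handled by unfolding the definitions and invoking permutability of $G$ to replace the condition $q\circ[i,j]\in x$ by $q\circ[i,j]\in x\cap G$. Your explicit identification of where permutability is used matches the key (unstated) step in the paper's chain of equalities.
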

\begin{proof}
It is easy to see that $h$ preserves $\cap,\sim$ so it remains to show that the $S_{ij}$'s are also preserved.
To do this let $i\neq j\in n$ and $x\in A.$ Now\begin{align*}
h(S_{ij}^\A x)= S_{ij}^\A x \cap G &=\{q\in{}^n U:q\circ [i,j]\in x\}\cap G\\
&=\{q\in G:q\circ [i,j]\in x\}\\
&=\{q\in G:q\circ [i,j]\in x\cap G\}\\
&=\{q\in G:q\circ [i,j]\in h(x) \}\\
&=S_{ij}^\mathcal{B} h(x)
\end{align*}
\end{proof}
The function $h$ will be called \emph{relativization by} $G$.

\begin{theorem}\label{small}
$RTA_n=\mathbf{SP}\{\A_{nk}:k\leq n\}.$
\end{theorem}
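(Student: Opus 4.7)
The plan is to prove the two inclusions separately.

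The inclusion $\mathbf{SP}\{\A_{nk} : k \leq n\} \subseteq RTA_n$ is immediate: each $\A_{nk}$ is by definition a full transposition set algebra of dimension $n$ with base $k$, hence lies in $SetTA_n$, and $RTA_n = \mathbf{SP}(SetTA_n)$ is closed under $\mathbf{SP}$.

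For the reverse inclusion, since $\mathbf{SP}\mathbf{SP} = \mathbf{SP}$, it suffices to embed every full transposition set algebra $\wp(^n U)$ into a product of algebras from $\{\A_{nk} : k \leq n\}$; an arbitrary member of $SetTA_n$ is then a subalgebra of such a product, and members of $RTA_n$ are handled by taking further products. The key observation is that for every subset $V \subseteq U$, the set ${}^n V$ is permutable in ${}^n U$, since any transposition leaves ranges unchanged: if $s \in {}^n V$ then $\rg(s \circ [i,j]) = \rg(s) \subseteq V$. Hence, by Theorem \ref{relativization}, the map $h_V(X) := X \cap {}^n V$ is a homomorphism $\wp({}^n U) \to \wp({}^n V)$, and after relabeling $V$ bijectively onto $|V|$ this yields a homomorphism into $\A_{n,|V|}$.

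I would then form the product
$$\Psi:\wp({}^n U) \longrightarrow \prod_{V \subseteq U,\ |V| \leq n} \A_{n,|V|}, \qquad \Psi(X)_V = X \cap {}^n V,$$
and verify injectivity. This is straightforward: if $X \neq Y$, pick $s$ lying in the symmetric difference and set $V := \rg(s)$; then $|V| \leq n$, $s \in {}^n V$, and $s$ witnesses that the $V$-th components of $\Psi(X)$ and $\Psi(Y)$ disagree. This gives $\wp({}^n U) \in \mathbf{SP}\{\A_{nk} : k \leq n\}$, and the theorem follows.

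I do not anticipate a serious obstacle. The only real insight beyond Theorem \ref{relativization} is that every $s \in {}^n U$ satisfies $|\rg(s)| \leq n$, so permutable sets of the form ${}^n V$ with $|V| \leq n$ already separate points of $\wp({}^n U)$; this is precisely what forces the bound $k \leq n$ in the generating family. A minor bookkeeping point is that, for each $k \leq n$, the product has one copy of $\A_{nk}$ for every $V \subseteq U$ of cardinality $k$, not a single copy; this is harmless since $\mathbf{SP}$ is closed under arbitrary products.
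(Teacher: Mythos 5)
Your proof is correct and follows essentially the same route as the paper: both reduce to embedding a full algebra $\wp({}^nU)$ into a product of small algebras by relativizing (via Theorem \ref{relativization}) to permutable sets of the form ${}^nV$ with $V$ the range of a sequence, using $|\rg(s)|\leq n$ to bound $k$. The only cosmetic differences are that you index the product by subsets $V\subseteq U$ rather than by nonzero elements $a$ (with one relativization per witness $q\in a$), and you verify injectivity of the product map directly instead of citing the separation lemma the paper invokes.
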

\begin{proof}
The proof is exactly like that of Theorem 4.9 in \cite{sagiphd}.
Clearly, $\{\A_{nk}:k\leq n\}\subset RTA_n,$ and since, by definition,
$RTA_n$ is closed under the formation of subalgebras and direct products, $RTA_n\supset\mathbf{SP}\{\A_{nk}:k\leq n\}.$

To prove the other inclusion, it is enough to show $SetTA_n\subset \mathbf{SP}\{\A_{nk}:k\leq n\}.$
Let $\A\in SetTA_n$ and suppose that $U$ is the base of $\A.$
If $U$ is empty, then $\A$ has one element, and one can easily show $\A\cong\A_{n0}.$
Otherwise for every $0^\A\neq a\in A$ we can construct a homomorphism $h_a$ such that
$h_a(a)\neq 0$ as follows. If $a\neq 0^\A$ then there is a sequence $q\in a.$ Let  $U_0^a=range(q)$.
Clearly, $^nU_0^a$ is permutable, therefore by Theorem \ref{relativization} relativizing by $^nU_0^a$ is a homomorphism to
$\A_{nk_a}$ (where $k_a:=|range(q)|\leq n$). Let $h_a$ be this homomorphism. Since $q\in {}^nU_0^a$ we have $h_a(a)\neq0^{\A_{nk_a}}.$
Applying Theorem \ref{folklore} one concludes that $\A\in\mathbf{SP}\{\A_{nk}:k\leq n\}$ as desired.
\end{proof}

Recall that a class $K$ of $BAO$'s is a quasi-variety if ${\bf SPUp}K=K$.
We now show:
\begin{theorem}For $n\geq 2$, $RTA_n$ is a quasi-variety
\end{theorem}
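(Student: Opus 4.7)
The plan is to appeal to Theorem~\ref{small}, which identifies $RTA_n$ with $\mathbf{SP}\{\A_{nk} : k \leq n\}$. This already makes $RTA_n$ closed under $\mathbf{I}$, $\mathbf{S}$, and $\mathbf{P}$, so to verify the quasi-variety identity $\mathbf{SPUp}\, RTA_n = RTA_n$ it is enough to establish closure under ultraproducts.

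The key observation is that each generator $\A_{nk}$ is finite: its universe $\mathcal{P}({}^n k)$ has cardinality $2^{k^n}$. Hence $\{\A_{nk} : k \leq n\}$ is a finite collection of finite similar algebras. I would first show that this generating set is itself closed under ultraproducts up to isomorphism. Given an arbitrary ultraproduct $\prod_{i \in I} \A_{n,k_i}/U$, the partition $I = \bigcup_{k \leq n} \{i : k_i = k\}$ has only $n+1$ cells, so the ultrafilter $U$ must contain one of them, say $J := \{i : k_i = k^*\}$. Since $J \in U$, the ultraproduct is isomorphic to an ultrapower of the single finite algebra $\A_{n,k^*}$, and any ultrapower of a finite structure is isomorphic to the structure itself via the diagonal map (a standard pigeonhole argument on the finitely many equivalence classes of functions $I \to \A_{n,k^*}$ modulo $U$).

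The final step is the classical universal-algebraic fact that $\mathbf{SPUp}\, K$ is a quasi-variety for every class $K$ (the Mal'cev-style theorem on generation of quasi-varieties). Taking $K = \{\A_{nk} : k \leq n\}$, the previous paragraph gives $\mathbf{Up}\, K \subseteq \mathbf{I}\, K$, so
\[
\mathbf{SPUp}\, K \;=\; \mathbf{SP}\, \mathbf{I}\, K \;=\; \mathbf{SP}\, K \;=\; RTA_n,
\]
and the right-hand side is therefore a quasi-variety.

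I do not anticipate any substantive obstacle. The whole argument hinges on the finitely-many-finite-algebras generation provided by Theorem~\ref{small}; beyond that, the only non-trivial ingredient is the routine pigeonhole-plus-diagonal observation that ultraproducts cannot escape a finite family of finite algebras. If anything calls for care, it is making sure the ultrafilter restriction to $J$ yields an honest isomorphism of the ultraproduct over $I$ with the ultrapower over $J$, but this is a standard property of ultraproducts on sets in the filter.
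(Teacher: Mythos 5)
Your proof is correct, but it takes a genuinely different route from the paper's. The paper argues directly and concretely: given a system of set algebras $\A_i\subseteq\wp({}^nU_i)$ and an ultrafilter $F$ on $I$, it forms the ultraproduct of the \emph{bases}, $U=\prod_{i\in I}U_i/F$, and writes down an explicit \L o\'s-style embedding of $\prod\A_i/F$ into $\wp({}^nU)$, thereby landing back in $\mathbf{SP}\,SetTA_n$. You instead exploit the finite-generation result of Theorem~\ref{small}: since $RTA_n=\mathbf{SP}\{\A_{nk}:k\leq n\}$ and this is a finite family of finite algebras, a pigeonhole argument on the ultrafilter plus the fact that ultrapowers of finite algebras are trivial shows $\mathbf{Up}$ cannot produce anything new, and the Mal'cev-style theorem on $\mathbf{SPUp}$ finishes the job. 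Both arguments are sound, and both ultimately lean on the same standard permutation-of-operators facts (the paper implicitly, when it reduces ``ultraproduct of members of $RTA_n$'' to ``ultraproduct of set algebras''; you explicitly, via the quoted generation theorem). What each buys: your argument is shorter and makes the finiteness phenomenon transparent, but it is tied to finite dimension $n$, where the small algebras exist and are finite; the paper's construction is independent of Theorem~\ref{small} and would survive in settings (e.g.\ infinite bases or infinite dimension) where the finite-generation crutch is unavailable. One small point of care in your version: the paper's definition of quasi-variety is the identity $\mathbf{SPUp}\,K=K$ with no explicit $\mathbf{I}$, so you should note that $\mathbf{S}$ is being read as ``isomorphic to a subalgebra of,'' which is what makes the step $\mathbf{SP}\,\mathbf{I}\,K=\mathbf{SP}\,K$ legitimate.
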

\begin{proof} It suffices to show that the ultraproduct of a system of set algebras is in $RTA_n$.
Let $I$ be a set, $F$ an ultrafilter on $I$, and let $(\A_i:i \in I)$ be a system of set algebras in $RTA_n$.
Assume that $\A_i\subset\wp(^nU_i)$. Let $U=\prod_{i\in I}U_i/F$, and define
$\psi:\prod \A_i/F\to \wp(^nU)$ by
$$(a_i:i\in I)/F\mapsto \{(s_0,\ldots, s_{n-1})/F: \{i\in I : (s_0(i),\ldots,s_{n-1}(i))\in a_i\}\in F\}.$$
The it can be easily checked that $\psi$ is an embedding.
\end{proof}

Now, is $RTA_n$ ($n\geq 2$) a variety ? The answer is No:

\begin{theorem}\label{not} For $n\geq 2$, $RTA_n$ is not a variety.
\end{theorem}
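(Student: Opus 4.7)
The strategy is to exhibit a finite algebra $\C$ in the similarity type of $RTA_n$ lying in $\mathbf{H}(RTA_n) \setminus RTA_n$; combined with the preceding quasi-variety result, this rules out $RTA_n$ being a variety. The offending equation to violate is $S_{ij}(p) = {\sim} p$ for a non-trivial element $p$. Take $\C = \{0, p, q, 1\}$ to be the four-element Boolean algebra with complementary atoms $p, q$, and declare $S_{ij}^{\C}$ to be the unique non-trivial Boolean automorphism of $\C$ (the swap of $p$ and $q$) for every $i \neq j$. Any equation among the $S_{ij}$'s inherited from $RTA_n$ -- in particular the relations coming from transpositions in $S_n$ -- is then automatically satisfied by this assignment, since it factors through the sign character $\mathrm{sgn}: S_n \to \{\pm 1\} \cong \mathrm{Aut}(\C)$.

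To see $\C \notin RTA_n$, suppose for contradiction that $\C$ embeds into some $\prod_{i \in I} \wp({}^n U_i)$, and let $(p_i)_{i \in I}$ be the image of $p$. Then $S_{ij}(p_i) = {\sim} p_i$ coordinatewise. But any constant sequence $s = \langle x, \ldots, x \rangle$ with $x \in U_i$ satisfies $s \circ [i,j] = s$, giving $s \in p_i \iff s \notin p_i$; hence every $U_i$ must be empty, forcing $p = 0$ and contradicting $p \neq 0$.

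For $\C \in \mathbf{H}(RTA_n)$ I would work with $\A_{nn} = \wp({}^n n)$. Let $P \subset {}^n n$ be the set of bijections (permutations) of $n$ and $N = {}^n n \setminus P$. Since pre-composition with a transposition preserves injectivity, $N$ is permutable, and the Boolean ideal $\wp(N) \subseteq \A_{nn}$ is closed under every $S_{ij}$; hence it is a congruence ideal, and the quotient $\A_{nn}/\wp(N)$ identifies canonically with the transposition algebra on $\wp(P)$. Decomposing $P = P_+ \cup P_-$ by sign, every transposition swaps $P_+$ and $P_-$, so $\{\emptyset, P_+, P_-, P\}$ is a subalgebra of $\wp(P)$ isomorphic to $\C$. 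Pulling this back through the quotient map exhibits $\C$ as a homomorphic image of a subalgebra of $\A_{nn}$, so $\C \in \mathbf{HS}\{\A_{nn}\} \subseteq \mathbf{H}(RTA_n)$. The only non-routine step is verifying that the non-permutation sequences form an $S_{ij}$-closed ideal, which reduces to the injectivity observation above; once that is in place, the sign splitting delivers the required bad equation immediately.
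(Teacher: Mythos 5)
Your argument is correct, and although it rests on the same two pillars as the paper's proof --- relativization to a permutable subset of ${}^nn$ to manufacture a homomorphic image, and the fixed-point argument with constant sequences to show that the resulting configuration cannot occur in a subalgebra of a product of square set algebras --- the witness you construct is genuinely different. The paper's offending quasi-equation is $s_fx+s_gx=-x\rightarrow 0=1$ for two $n$-cycles $f,g$, violated in the relativization of $\wp({}^nn)$ to the permutable set $G=\{e_i:i<n\}$ of ``unit vectors'' with $X$ the odd-indexed ones; the two $n$-cycles are needed there because a single transposition does not swap the odd- and even-indexed $e_i$'s. You instead relativize to the set $P$ of permutations of $n$ and split by sign, so that a \emph{single} transposition already carries $P_+$ onto its complement $P_-$; this violates the simpler quasi-equation $s_{[i,j]}x=-x\rightarrow 0=1$, whose validity throughout $RTA_n$ is established by exactly the constant-map computation the paper uses for its $\sigma$. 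You also go one step further and extract the concrete four-element algebra $\{\emptyset,P_+,P_-,P\}$, which lies in $\mathbf{SH}\{\A_{nn}\}\subseteq\mathbf{H}(RTA_n)$ but not in $RTA_n$; this gives a finite non-representable member of the generated variety, whereas the paper's witness $\wp(G)$ has $2^n$ elements. Three small points: the appeal to the quasi-variety theorem is superfluous, since exhibiting a homomorphic image outside $RTA_n$ already refutes closure under $\mathbf{H}$; your congruence-ideal detour can be replaced by a direct appeal to Theorem \ref{relativization} applied to the permutable set $P$, exactly as the paper does for its $G$; and the heuristic remark that the inherited equations ``factor through the sign character'' should not be leaned on --- it is the explicit verification that $\{\emptyset,P_+,P_-,P\}$ is closed under the operations that carries the proof.
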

\begin{proof}
Let us denote by $\sigma$ the quasi-equation
$$s_fx+s_gx=-x\longrightarrow 0=1,$$ where $f,g$ can be any permutation.
We claim that for all $k\leq n,$ $\sigma$ holds in the small algebra $\A_{nk}$
(or more generally, any set algebra with square unit).
This can be seen using a constant map in $^nk.$ More precisely, let $q\in  {}^nk$
be an arbitrary constant map, and let $X$ be any subset of $^nk.$
We have two cases for $q$ which are $q\in X$ or $q\in \sim X$. In either case,
noticing that $q\in X\Leftrightarrow q\in S_f(X)\cup S_g(X),$ it cannot be $ S_f(X)\cup S_g(X)=\sim X.$
Thus, the implication $\sigma$ holds in $\A_{nk}.$
It follows then, from Theorem \ref{small},
that $RTA_n\models\sigma$ (because the operators $\mathbf{S}$ and $\mathbf{P}$ preserve quasi-equations).

Now we are going to show that there is some element $\B\in PTA_n$, and  specific permutations $f,g$, such that $\B\nvDash\sigma.$
Let $G\subset {}^nn$ be the following permutable set $$G=\{s\in {}^n2:|\{i:s(i)=1\}|=1\}.$$
Let $\B=\wp(G) \in PTA_n$ and take $f$ as the $n$-cycle $(0 \; 1\ldots n-1)$, and $g$ as the $n$-cycle $(n-1 \; n-2\ldots 0)$.
Let $X$ be the following subset of $G,$ $$X=\{e_i:i\mbox{ is odd, }i<n\},$$
where $e_i$ denotes the map that maps every element to $0$ except that the $i$th element is mapped to $1$.
It is easy to see that, for all $i<n,$ $e_{i+1}\circ f=e_i$ while $e_i\circ g=e_{i+1}$ with the exceptions that $e_0\circ f=e_{n-1}$ and $e_{n-1}\circ g=e_0.$
This clearly implies that $$S_f^\B(X)\cup S_g^\B(X)=\sim X=\{e_i:i\mbox{ is even, }i<n\}.$$
Since $0^\B\neq 1^\B,$ $X$ falsifies $\sigma$ in $\B.$ Since $\B\in {\bf H}\{{\wp(^nn)}\}$, we are done.
\end{proof}


\begin{thebibliography}{}
\bibitem{sagiphd}G. S\'agi, \emph{ A Note on Algebras of Substitutions}, Studia Logica, (72)(2) (2002), p 265-284.
\end{thebibliography}
\end{document}